\DeclareMathOperator{\Ass}{Ass}
\DeclareMathOperator{\Ext}{Ext}
\DeclareMathOperator{\Hom}{Hom}
\DeclareMathOperator{\Image}{Image}
\DeclareMathOperator{\Tor}{Tor}
\renewcommand{\ge}{\geqslant}
\renewcommand{\le}{\leqslant}
\theoremstyle{plain}
\newtheorem{theorem}{Theorem}
\newtheorem{lemma}[theorem]{Lemma}
\theoremstyle{definition}
\newtheorem{setup}[theorem]{Set-up}
\newtheorem{discussion}[theorem]{Discussion}
\theoremstyle{remark}
\numberwithin{equation}{theorem}
\title[A short proof of a result of Katz and West]{A short proof of a result of Katz and West}
\author[Dipankar Ghosh]{Dipankar Ghosh}
\address{Department of Mathematics, Indian Institute of Technology Bombay, Powai, Mumbai 400076, India}
\email{dipankar@math.iitb.ac.in, dipug23@gmail.com {\rm (Dipankar Ghosh)}}
\author[Tony J. Puthenpurakal]{Tony J. Puthenpurakal}
\email{tputhen@math.iitb.ac.in {\rm (Tony J. Puthenpurakal)}}
\keywords{Associate primes; Rees ring; Ext; Tor}
\subjclass[2010]{Primary 13E05, 13D07; Secondary 13A15, 13A30}
\date{February 5, 2016}
\begin{document}

\begin{abstract}
 We give a short proof of a result due to Katz and West: Let $R$ be a Noetherian ring and $I_1,\ldots,I_t$ ideals of $R$.
 Let $M$ and $N$ be finitely generated $R$-modules and $N' \subseteq N$ a submodule. For every fixed $i \ge 0$, the sets
 $\Ass_R\left( \Ext_R^i(M,N/I_1^{n_1}\cdots I_t^{n_t} N') \right)$ and $\Ass_R\left( \Tor_i^R(M,N/I_1^{n_1}\cdots I_t^{n_t} N') \right)$
 are independent of $(n_1,\ldots,n_t)$ for all sufficiently large $n_1,\ldots,n_t$.
\end{abstract}

\maketitle

 Often in mathematics, once an interesting result has been established, other proof of the same result appears. In this article,
 we give a short proof of a result due to Katz and West.
 
 Let $R$ be a commutative Noetherian ring with identity. Let $I$ be an ideal of $R$ and $M$ a finitely generated $R$-module.
 In \cite{Rat76}, Ratliff conjectured about the asymptotic behaviour of the set of associated prime ideals $\Ass_R(R/I^n)$ (when $R$
 is a domain). Subsequently, Brodmann \cite{Bro79} proved that $\Ass_R(M/I^n M)$ stabilizes for $n$ sufficiently large. Thereafter,
 this result was extended to an arbitrary finite collection of ideals by Kingsbury and Sharp in \cite[Theorem~1.5]{KS96}; see also
 \cite[Corollary~1.8(c)]{KMR89}.
 
 In a different direction, Melkersson and Schenzel generalized Brodmann's result by showing that
 $\Ass_R(\Tor_i^R(M, R/I^n))$ is independent of $n$ for all large $n$ and for every fixed $i \ge 0$; see
 \cite[Theorem~1]{MS93}. Recently, in \cite[Corollary~3.5]{KW04}, Katz and West proved all the above results in a more general form:
 
\begin{setup}\label{set up}
 Let $I_1,\ldots,I_t$ be ideals of $R$. Suppose $M$ and $N$ are finitely generated $R$-modules and $N' \subseteq N$ a submodule.
 We set $\mathbb{N} := \{ n \in \mathbb{Z} : n \ge 0 \}$. Fix $i \in \mathbb{N}$. For every $\mathbf{n} := (n_1,\ldots,n_t) \in
 \mathbb{N}^t$, we denote $\mathbf{I}^{\mathbf{n}} := I_1^{n_1}\cdots I_t^{n_t}$ and we set
 \begin{center}
  $W_{\mathbf{n}} := \Ext_R^i\left( M, N/\mathbf{I}^{\mathbf{n}} N' \right) \quad\mbox{and}\quad
  W'_{\mathbf{n}} := \Tor_i^R\left( M, N/\mathbf{I}^{\mathbf{n}} N' \right)$.
 \end{center}
 Let $W := \bigoplus_{\mathbf{n} \in \mathbb{N}^t} W_{\mathbf{n}}$ and $W' := \bigoplus_{\mathbf{n} \in \mathbb{N}^t} W'_{\mathbf{n}}$.
\end{setup}
 
 With this set-up, Katz and West showed that $\Ass_R(W_{\mathbf{n}})$ and $\Ass_R(W'_{\mathbf{n}})$ are
 independent of $\mathbf{n}$ for all $\mathbf{n} \gg 0$. The aim of this article is to give a short 
 proof of this result. {\it We prove all the results here for Ext-modules only. For the analogous result of Tor-modules,
 the proof goes through exactly the same way}.

\begin{discussion}\label{discussion}
 For every $1 \le j \le t$, $\mathbf{e}^j$ denotes the $j$th standard basis element of $\mathbb{N}^t$.
 Let $\mathscr{R}(\mathbf{I}) := \bigoplus_{\mathbf{n} \in \mathbb{N}^t} \mathbf{I}^{\mathbf{n}}$ be the $\mathbb{N}^t$-graded Rees
 ring. The short exact sequence
 \begin{center}
  $0 \to \bigoplus_{\mathbf{n} \in \mathbb{N}^t}
  \mathbf{I}^{\mathbf{n}} N'/\mathbf{I}^{\mathbf{n} + \mathbf{e}^j} N'
  \longrightarrow \bigoplus_{\mathbf{n} \in \mathbb{N}^t} N/\mathbf{I}^{\mathbf{n} + \mathbf{e}^j} N'
  \longrightarrow \bigoplus_{\mathbf{n} \in \mathbb{N}^t} N/\mathbf{I}^{\mathbf{n}} N' \to 0$
 \end{center}
 yields the following exact sequence of $\mathbb{N}^t$-graded $\mathscr{R}(\mathbf{I})$-modules:
 \[
  \bigoplus_{\mathbf{n} \in \mathbb{N}^t}
  \Ext_R^i\left( M,\dfrac{\mathbf{I}^{\mathbf{n}} N'}{\mathbf{I}^{\mathbf{n}+\mathbf{e}^j} N'} \right)
  \stackrel{\Phi_j}{\longrightarrow} W(\mathbf{e}^j) \longrightarrow W \stackrel{\Psi_j}{\longrightarrow}
  \bigoplus_{\mathbf{n} \in \mathbb{N}^t}
  \Ext_R^{i+1}\left( M, \dfrac{\mathbf{I}^{\mathbf{n}} N'}{\mathbf{I}^{\mathbf{n} + \mathbf{e}^j} N'} \right),
 \]
 where $W(\mathbf{e}^j)_{\mathbf{n}} := W_{\mathbf{n} + \mathbf{e}^j}$ for all $\mathbf{n} \in \mathbb{N}^t$.
 Setting $U^j := \Image(\Phi_j)$ and $V^j := \Image(\Psi_j)$, we obtain the following exact sequence of $\mathbb{N}^t$-graded
 $\mathscr{R}(\mathbf{I})$-modules:
 \begin{equation}\label{equation 1}
  0 \longrightarrow U^j \longrightarrow W(\mathbf{e}^j) \longrightarrow W \longrightarrow V^j \longrightarrow 0,
 \end{equation}
 where $U^j$ and $V^j$ are finitely generated $\mathbb{N}^t$-graded $\mathscr{R}(\mathbf{I})$-modules.
\end{discussion}

 We say an $\mathbb{N}^t$-graded module $U$ is {\it eventually zero} (resp. {\it non-zero}) if $U_{\mathbf{n}} = 0$ for all
 $\mathbf{n} \gg 0$ (resp. $U_{\mathbf{n}} \neq 0$ for all $\mathbf{n} \gg 0$). By virtue of \cite[Proposition~5.1]{Wes04}, {\it a
 finitely generated $\mathbb{N}^t$-graded $\mathscr{R}(\mathbf{I})$-module is either eventually zero or eventually non-zero}.

\begin{lemma}\label{lemma: Hilbert func}
 With the {\rm Set-up~\ref{set up}}, if $(R,\mathfrak{m},k)$ is a local ring, then each of $\Hom_R(k,W)$ and
 $\Hom_R(k,W')$ is either eventually zero or eventually non-zero.
\end{lemma}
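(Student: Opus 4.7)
The plan is to reduce the lemma to the West-type dichotomy quoted just above it, applied not to $L := \Hom_R(k,W)$ itself (which need not be a finitely generated $\mathscr{R}(\mathbf{I})$-module) but to the kernel and cokernel of the map $L(\mathbf{e}^j) \to L$ induced by \eqref{equation 1}, for each $j \in \{1, \ldots, t\}$. The argument for $\Hom_R(k, W')$ is identical after replacing $\Ext$ by $\Tor$, so I only discuss $L$.

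The first step is to split \eqref{equation 1} into the pair of short exact sequences
\[
 0 \to U^j \to W(\mathbf{e}^j) \to K^j \to 0 \quad\text{and}\quad 0 \to K^j \to W \to V^j \to 0,
\]
with $K^j := \Image(W(\mathbf{e}^j) \to W)$, and apply $\Hom_R(k,-)$ to each. A chase through the two long exact sequences identifies $\ker(L(\mathbf{e}^j) \to L)$ with $\Hom_R(k, U^j)$ and presents $\mathrm{coker}(L(\mathbf{e}^j) \to L)$ as an extension whose outer terms embed into $\Ext^1_R(k, U^j)$ and $\Hom_R(k, V^j)$ respectively. Because $\mathscr{R}(\mathbf{I})$ is Noetherian and $U^j, V^j$ are finitely generated $\mathscr{R}(\mathbf{I})$-modules, a truncated free $R$-resolution of $k$ realizes each of $\Hom_R(k, U^j)$, $\Ext^1_R(k, U^j)$, and $\Hom_R(k, V^j)$ as a finitely generated graded $\mathscr{R}(\mathbf{I})$-module; hence so are $\ker(L(\mathbf{e}^j) \to L)$ and $\mathrm{coker}(L(\mathbf{e}^j) \to L)$ for every $j$.

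The conclusion then drops out of a short case analysis using West's dichotomy on these kernels and cokernels. If for every $j$ both $\ker(L(\mathbf{e}^j) \to L)$ and $\mathrm{coker}(L(\mathbf{e}^j) \to L)$ are eventually zero, then $L_{\mathbf{n}+\mathbf{e}^j} \to L_\mathbf{n}$ is an isomorphism for $\mathbf{n} \gg 0$ and all $j$; since any two sufficiently large $\mathbf{n}, \mathbf{n}'$ admit a common componentwise upper bound, this forces $\dim_k L_\mathbf{n}$ to be constant for $\mathbf{n} \gg 0$, yielding the dichotomy directly. Otherwise some $\Hom_R(k, U^j)_\mathbf{n}$ or some $\mathrm{coker}(L(\mathbf{e}^j) \to L)_\mathbf{n}$ is nonzero for $\mathbf{n} \gg 0$: in the first case $L_{\mathbf{n}+\mathbf{e}^j}$ contains the nonzero submodule $\Hom_R(k, U^j)_\mathbf{n}$, and in the second $L_\mathbf{n}$ surjects onto a nonzero cokernel, so either way $L$ is eventually nonzero. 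The main obstacle is the bookkeeping in the two long exact sequences needed to pin down $\ker(L(\mathbf{e}^j) \to L)$ and $\mathrm{coker}(L(\mathbf{e}^j) \to L)$; once these are identified as subquotients of $\Hom_R(k, U^j)$, $\Ext^1_R(k, U^j)$, and $\Hom_R(k, V^j)$, West's dichotomy finishes the proof.
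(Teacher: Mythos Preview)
Your argument is correct and follows the same route as the paper's proof: split \eqref{equation 1} through $K^j=\Image(W(\mathbf{e}^j)\to W)$ (the paper calls this $X^j$), apply $\Hom_R(k,-)$, identify the kernel and the two graded pieces of the cokernel of $L(\mathbf{e}^j)\to L$ with submodules of $\Hom_R(k,U^j)$, $\Ext^1_R(k,U^j)$, $\Hom_R(k,V^j)$, and then invoke West's dichotomy on these finitely generated $\mathscr{R}(\mathbf{I})$-modules, falling back on constancy of $\dim_k L_{\mathbf{n}}$ when all are eventually zero. The only cosmetic difference is that you package the paper's $Y^j$ and $Z^j$ together as the single cokernel $\mathrm{coker}(L(\mathbf{e}^j)\to L)$ and apply the dichotomy to it directly.
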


\begin{proof}
 For every $1 \le j \le t$, in view of \eqref{equation 1}, by setting $X^j := \Image(W(\mathbf{e}^j) \to W)$, we obtain the following
 short exact sequences of $\mathbb{N}^t$-graded $\mathscr{R}(\mathbf{I})$-modules:
 \[
  0 \to U^j \to W(\mathbf{e}^j) \to X^j \to 0 \quad \mbox{and}\quad
  0 \to X^j \to W \to V^j \to 0,
 \]
 which induce the following exact sequences of $\mathbb{N}^t$-graded $\mathscr{R}(\mathbf{I})$-modules:
 \begin{align}
  & 0 \to \Hom_R(k,U^j) \to \Hom_R(k,W(\mathbf{e}^j)) \to \Hom_R(k,X^j) \to Y^j \to 0,\label{equation 2} \\
  & 0 \to \Hom_R(k,X^j) \to \Hom_R(k,W) \to Z^j \to 0,\label{equation 3}
 \end{align}
 where $Y^j$ and $Z^j$ (being submodules of $\Ext_R^1(k,U^j)$ and $\Hom_R(k,V^j)$ respectively) are finitely generated
 $\mathbb{N}^t$-graded $\mathscr{R}(\mathbf{I})$-modules. It can be observed from \eqref{equation 2} and \eqref{equation 3} that if
 any one of $\Hom_R(k,U^j)$, $Y^j$ and $Z^j$ ($1 \le j \le t$) is eventually non-zero, then so is $\Hom_R(k,W)$, and we are done.
 So we may assume that $\Hom_R(k,U^j)$, $Y^j$ and $Z^j$ are eventually zero for all $1 \le j \le t$. In this case, setting
 $f(\mathbf{n}) := \textrm{length}(\Hom_R(k,W_{\mathbf{n}}))$ for all $\mathbf{n} \in \mathbb{N}^t$, in view of the $\mathbf{n}$th
 components of \eqref{equation 2} and \eqref{equation 3}, we obtain that
 $f(\mathbf{n} + \mathbf{e}^j) = f(\mathbf{n})$ for all $1 \le j \le t$ and for all $\mathbf{n} \gg 0$. Therefore $f(\mathbf{n}) = c$
 for all $\mathbf{n} \gg 0$, where $c$ is a constant. The lemma now follows easily.
\end{proof}

 Now we can achieve the aim of this article.
 
\begin{theorem}\label{theorem: stability}
 With the {\rm Set-up~\ref{set up}}, there exists $\mathbf{k} \in \mathbb{N}^t$ such that the sets $\Ass_R(W_{\mathbf{n}})$ and
 $\Ass_R(W'_{\mathbf{n}})$ are independent of $\mathbf{n}$ for all $\mathbf{n} \ge \mathbf{k}$.
\end{theorem}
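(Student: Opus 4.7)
The plan is to reduce the theorem to the local case handled by Lemma~\ref{lemma: Hilbert func} by localizing at each candidate associated prime. For this reduction to produce a uniform threshold $\mathbf{k}$, I first need the set
\[
\mathscr{A} := \bigcup_{\mathbf{n} \in \mathbb{N}^t} \Ass_R(W_{\mathbf{n}})
\]
to be finite. So the proof splits into a finiteness step for $\mathscr{A}$ and a per-prime stabilization step.

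To prove $\mathscr{A}$ is finite, I would exploit the exact sequence \eqref{equation 1}. Setting $X^j := \Image(W(\mathbf{e}^j) \to W)$ as in the proof of Lemma~\ref{lemma: Hilbert func}, the four-term sequence splits into
\[
0 \to U^j \to W(\mathbf{e}^j) \to X^j \to 0 \quad \text{and} \quad 0 \to X^j \to W \to V^j \to 0.
\]
Taking the $\mathbf{n}$-th graded piece of the first short exact sequence yields $\Ass_R(W_{\mathbf{n}+\mathbf{e}^j}) \subseteq \Ass_R(U^j_{\mathbf{n}}) \cup \Ass_R(X^j_{\mathbf{n}})$, and the injection $X^j_{\mathbf{n}} \hookrightarrow W_{\mathbf{n}}$ gives $\Ass_R(X^j_{\mathbf{n}}) \subseteq \Ass_R(W_{\mathbf{n}})$. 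Iterating these one-step inclusions along a coordinate path from $\mathbf{0}$ to $\mathbf{n}$ produces
\[
\Ass_R(W_{\mathbf{n}}) \subseteq \Ass_R(W_{\mathbf{0}}) \cup \bigcup_{j=1}^{t} \Ass_R(U^j),
\]
where $\Ass_R(U^j) = \bigcup_{\mathbf{m}} \Ass_R(U^j_{\mathbf{m}})$. Now $\Ass_R(W_{\mathbf{0}}) = \Ass_R(\Ext_R^i(M, N/N'))$ is finite because $W_{\mathbf{0}}$ is a finitely generated $R$-module, and each $\Ass_R(U^j)$ is finite because $U^j$ is a finitely generated module over the Noetherian Rees ring $\mathscr{R}(\mathbf{I})$: then $\Ass_{\mathscr{R}(\mathbf{I})}(U^j)$ is finite, and $\Ass_R(U^j)$ is its image under contraction to $R$.

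With $\mathscr{A}$ finite in hand, for each $\mathfrak{p} \in \mathscr{A}$ I would localize the Set-up at $\mathfrak{p}$. Since $M$ is finitely generated and $R$ is Noetherian, $\Ext_R^i$ commutes with localization, so $(W_{\mathbf{n}})_{\mathfrak{p}}$ is the corresponding Ext module built from $M_{\mathfrak{p}}, N_{\mathfrak{p}}, N'_{\mathfrak{p}}$ and the ideals $(I_j)_{\mathfrak{p}}$. Applying Lemma~\ref{lemma: Hilbert func} to the local ring $R_{\mathfrak{p}}$ with residue field $k(\mathfrak{p}) = R_{\mathfrak{p}}/\mathfrak{p} R_{\mathfrak{p}}$ shows that $\Hom_{R_{\mathfrak{p}}}(k(\mathfrak{p}), (W_{\mathbf{n}})_{\mathfrak{p}})$ is eventually zero or eventually non-zero in $\mathbf{n}$. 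Invoking $\mathfrak{p} \in \Ass_R(W_{\mathbf{n}}) \iff \Hom_{R_{\mathfrak{p}}}(k(\mathfrak{p}), (W_{\mathbf{n}})_{\mathfrak{p}}) \neq 0$ produces a $\mathbf{k}_{\mathfrak{p}} \in \mathbb{N}^t$ beyond which membership of $\mathfrak{p}$ in $\Ass_R(W_{\mathbf{n}})$ is constant. Taking $\mathbf{k}$ to be the componentwise maximum of $\mathbf{k}_{\mathfrak{p}}$ over the finite set $\mathscr{A}$ (and doing the analogous argument for Tor) yields the desired uniform index.

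I expect the main obstacle to be the finiteness of $\mathscr{A}$, rather than the localization step which is routine once a finite candidate set is in hand. The difficulty is that $W$ itself is not known to be finitely generated over $\mathscr{R}(\mathbf{I})$; the trick is to bootstrap control of $\Ass_R(W_{\mathbf{n}})$ inductively through the finitely generated subquotients $U^j$ supplied by the exact sequence \eqref{equation 1}.
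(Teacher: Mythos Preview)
Your proof is correct and follows essentially the same approach as the paper: establish finiteness of $\bigcup_{\mathbf{n}} \Ass_R(W_{\mathbf{n}})$ via the one-step inclusion $\Ass_R(W_{\mathbf{n}+\mathbf{e}^j}) \subseteq \Ass_R(U^j_{\mathbf{n}}) \cup \Ass_R(W_{\mathbf{n}})$ coming from \eqref{equation 1}, then localize at each candidate prime and invoke Lemma~\ref{lemma: Hilbert func}. The only cosmetic difference is in the finiteness step: the paper iterates along the first coordinate only and finishes by induction on $t$ (reducing to the $(t-1)$-ideal case at $W_{(0,n_2,\ldots,n_t)}$), whereas you walk a full coordinate path from $\mathbf{n}$ down to $\mathbf{0}$ using all the $U^j$, thereby avoiding the induction.
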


\begin{proof}
 We first show that $\bigcup_{\mathbf{n} \in \mathbb{N}^t} \Ass_R(W_{\mathbf{n}})$ is finite. For every
 $\mathbf{n} \in \mathbb{N}^t$, the $\mathbf{n}$th component of the exact sequence \eqref{equation 1} (for $j = 1$) gives
 \begin{align*}
  \Ass_R(W_{\mathbf{n} + \mathbf{e}^1}) & \subseteq \Ass_R(U_{\mathbf{n}}^1) \cup \Ass_R(W_{\mathbf{n}}) \\
  & \subseteq \Ass_R(U_{\mathbf{n}}^1) \cup \Ass_R(U_{\mathbf{n} - \mathbf{e}^1}^1) \cup \Ass_A(W_{\mathbf{n}-\mathbf{e}^1})\\
              & ~~\cdots \\
              & \subseteq \Big( \bigcup_{0 \le l \le n_1} \Ass_R\left(U_{(l,n_2,\ldots,n_t)}^1\right) \Big) \bigcup
                \Ass_R\left( W_{(0,n_2,\ldots,n_t)} \right).
 \end{align*}
 Taking union over $\mathbf{n} \in \mathbb{N}^t$, we obtain that
 \[
  \bigcup_{\mathbf{n} \in \mathbb{N}^t} \Ass_R(W_{\mathbf{n}}) \subseteq
  \Big( \bigcup_{\mathbf{n} \in \mathbb{N}^t} \Ass_R(U_{\mathbf{n}}^1) \Big) \bigcup
  \Big( \bigcup_{(n_2,\ldots,n_t) \in \mathbb{N}^{t-1}} \Ass_R\left( W_{(0,n_2,\ldots,n_t)} \right) \Big).
 \]
 Since $U^1$ is finitely generated, the set $\bigcup_{\mathbf{n} \in \mathbb{N}^t} \Ass_R(U_{\mathbf{n}}^1)$ is finite;
 see \cite[Lemma~3.2]{Wes04}. Therefore one obtains that $\bigcup_{\mathbf{n} \in \mathbb{N}^t} \Ass_R(W_{\mathbf{n}})$ is finite
 by using induction on $t$. 
 
 Since $\bigcup_{\mathbf{n} \in \mathbb{N}^t} \Ass_R(W_{\mathbf{n}})$ is finite, it is now enough to prove that for every
 $\mathfrak{p} \in \bigcup_{\mathbf{n} \in \mathbb{N}^t} \Ass_R(W_{\mathbf{n}})$, exactly
 one of the following alternatives must hold: either $\mathfrak{p} \in \Ass_R(W_{\mathbf{n}})$ for all $\mathbf{n} \gg 0$; or
 $\mathfrak{p} \notin \Ass_R(W_{\mathbf{n}})$ for all $\mathbf{n} \gg 0$. Localizing at $\mathfrak{p}$, and replacing
 $R_{\mathfrak{p}}$ by $R$ and $\mathfrak{p}R_{\mathfrak{p}}$ by $\mathfrak{m}$, it is now enough to prove that either
 $\mathfrak{m} \in \Ass_R(W_{\mathbf{n}})$ for all $\mathbf{n} \gg 0$; or $\mathfrak{m} \notin \Ass_R(W_{\mathbf{n}})$ for all
 $\mathbf{n} \gg 0$, which is equivalent to that either $\Hom_R(k, W_{\mathbf{n}}) \neq 0$ for all $\mathbf{n} \gg 0$; or
 $\Hom_R(k, W_{\mathbf{n}}) = 0$ for all $\mathbf{n} \gg 0$, where $k := R/\mathfrak{m}$. The last result follows from
 Lemma~\ref{lemma: Hilbert func}.
\end{proof}

\section*{Acknowledgements}

The first author would like to thank NBHM, DAE, Govt. of India for providing financial support for this study.

\end{document}